 \newcommand{\po}{\ar@{}[dr]|{\text{\pigpenfont R}}} \newcommand{\pb}{\ar@{}[dr]|{\text{\pigpenfont J}}}
 \newcommand{\Hom}{{\rm Hom}}
 \newcommand{\conn}{\rm conn}
 \def\p3{\mathbf{P}}
 \def\cG{\mathcal{G}}
 \def\C{\mathcal{C}}
 \def\x{\times}
 \def\max{\text{max}}
 \newtheorem{thm}{Theorem}[section]
 \newtheorem{defn}[thm]{Definition}
 \newtheorem{lemma}[thm]{Lemma}
 \newtheorem{prop}[thm]{Proposition}
 \newtheorem{cor}[thm]{Corollary}
\theoremstyle{definition}
\newtheorem{remark}[thm]{Remark}
 \def\N{\text{N}}
  \def\NC{\mathcal{N}}
 \def\S{\text{S}}
 \def\D{{\mathbb{D}}}
 \def\diam{\text{diam}}
\begin{document}
 	\title{ Lov\'{a}sz' original lower bound: Getting tighter bounds and Reducing computational complexity}
 	\date{}
 	\author{Shuchita Goyal\footnote{The author would like to thank CSIR, India and IRCC, IIT Bombay for their financial support.} \  and Rekha Santhanam} 	\maketitle
 	\begin{abstract}
 		In this article, we give conditions on a graph under which the Lov\'{a}sz' original bound of the graph can be improved by increasing the topological connectivity of its neighbourhood complex.
        We also work out conditions under which computing the topological connectivity of  hom complex of a pair of graphs can be simplified. 
        In particular, hom complex as a covariant functor acting on a double mapping cylinder of graphs is a homotopy pushout of  hom complex functor applied to its subgraphs. 
        We give applications of  this result where the computation of hom complexes is simplified. 
        Finally, we explain why double mapping cylinder of graphs does not give a satisfactory definition of homotopy pushout in the category of graphs.
			\end{abstract}
 	
 	\section{Introduction}
 	For a graph $G$, the question of determining the chromatic number of $G$ has been of central importance in graph theory. In the year 1955, Kneser formulated a class of graphs, popularly known as Kneser graphs, $KG_{n,k}$, and conjectured that the chromatic number of $KG_{n,k}$ is $n-2k+2$. This question remained open for almost 25 years until 1978 when Lov\'{a}sz came up with a topological space associated to a graph $G$, called the neighbourhood complex, $\NC(G)$, of that graph. His proof opened a wide area of mathematics, topological combinatorics. Lov\'{a}sz proved the Kneser's conjecture using topological properties of $\NC(KG_{n,k})$. In particular, he showed that $\chi(G)$ is bounded from below by the topological connectivity of $\NC(G)$ plus 3, for every graph $G$. This bound is often referred to as the Lov\'{a}sz'  original bound. Generalizing this idea, he also conjectured that for any two graphs $G,T$, the chromatic number of the graph $G$ is bounded from below as per the following inequality:  
 	
 	\begin{equation}\label{lovasz conjecture}
 		\chi(G) \ge \chi(T) + \conn \Hom(T,G) + 1.
 	\end{equation}
 	
  	A graph, $T$, for which this inequality holds with respect to every graph $G$ is called a test graph. In view of Kozlov's result in \cite{kozlov-hom-from-k2-is-neighbourhood-complex} which states that $\Hom(K_2,G)$ is homotopy equivalent to $\NC(G)$, Lov\'{a}sz result \cite{lov-kneser} showed that $K_2$ is a test graph.  It has been shown that the class of complete graphs \cite{BabKoz-HOMcplx}, odd cycles \cite{BabKoz-topo-Obstructions}, bipartite graphs \cite{Matsushita-bipartite}, special Kneser graphs \cite{kneser-ht-graph}, etc, are all test graphs. Lov\'{a}sz conjecture has been disproved by Hoory and Linial \cite{countereg-to-lov} who exhibited explicit graphs $G$ and $T$ such that the inequality \eqref{lovasz conjecture} fails to satisfy. 	
  	
 	In order to apply inequality \eqref{lovasz conjecture}, we need to be able to compute the connectivity of the hom complex.  
 	Some special  cases have been studied \cite{homintokn,homofcycles,homcycletokn}.  This computation, however, in general is extremely difficult. 
 	For instance, for $n$ greater than 3, computing whether $\Hom(T,K_n)$ is path connected is a PSPACE-complete problem \cite{pspace-complete}.

  	We begin this article with  methods to improve Lov\'{a}sz' original lower bound for chromatic number by replacing the given graph $G$ with a smaller graph.  More precisely, we give conditions on a degree 2 vertex $v$ of a graph $G$ so that $\chi(G-v) = \chi(G)$, and the connectivity of the neighbourhood complex of $G-v$ is greater than or equal to that of the  neighbourhood complex of $G$. We also analyse the conditions on a degree 3 vertex $v$ for which  the connectivity of the neighbourhood complex of $G-v$ is greater than or equal to  the connectivity of the neighbourhood complex of $G$. 
 	
     Next we study pushouts in graphs and their behaviour with respect to the $\Hom(T,\underline{ \ \ })$ functor for any graph $T$. 
     It is a quick check that in general pushouts are not preserved by the hom functor. 
     It turns out though that double mapping cylinders of graphs are preserved under $\Hom(T,\underline{ \ \ })$ functor. 
     For a fixed homotopy test graph $T$, and given any $m$ large enough, we use the properties of the double mapping cylinder to construct a graph $G$ whose chromatic number is $m$ and the topological connectivity of the hom complex, $\Hom(T,G)$, is 0.
     We also compute the homotopy type of neighbourhood complexes of a certain class of graphs.
 	
 Based on our applications it is useful to know when a given graph is $\x$-homotopy equivalent to the double mapping cylinder of some smaller graphs. Understanding double mapping cylinder of graphs as their homotopy pushout is then vital. In  Section \ref{sec:Dn not good notion section},  we point out why  double mapping cylinder of graphs cannot be a homotopy pushout object in the category of graphs. In a following article, we study this notion further.

	\section{Improving Lov\'{a}sz' original bound }\label{sec:choosing subgraphs}

	A {\it graph} $G$ is a pair of sets $(V(G),E(G))$. The elements of $V(G)$ are called {\it vertices} of $G$, and  the elements of $E(G)$, which are  two element subsets of $V(G)$, are called {\it edges} of $G$. These elements need not be distinct, that is, $\{y,y\}$ can also be an edge and this makes $y$ into a {\it looped} vertex. 
	We  say that two vertices  $x$, $y$ are adjacent if  $\{x,y\} \in E(G)$ and denote the edge by $xy$.  A  {\it simple graph} is a graph without any looped vertex. A {\it complete graph} on $n$ vertices is a simple graph where any two distinct vertices are adjacent to each other, and is denoted by $K_n$.	For $n \in \mathbb{N}$, let $I_n$ be the graph with vertex set $\{0,1,\dots,n\}$ and edge set $\{ij : |i-j| \le 1\}$. 
	
	A {\it homomorphism} between two graphs $f: G \to H$ is a function from $V(G)$ to $V(H)$ such that if $xy$ is an edge in $G$, then $f(x)f(y)$ is an edge in $H$. The {\it chromatic number} for any simple graph $G$ is defined to the least integer $n$ such that there is a graph homomorphism from $G $ into $K_n$. 
	
	\begin{defn}
		Let $v,v' \in G$ and $v \ne v'$. Then $v$ is said to fold to $v' \in G$ if every neighbour of $v$ is also a neighbour of $v'$. In this case, we say that $G$ folds to $G-v$. A graph $G$ is called stiff if there does not exist any $ v \in G$ such that $G$ folds to $G-v$.
	\end{defn}
	
	\begin{defn}
		Let $G , H \in \cG$ be two graphs. The (categorical) product of $G$ and $H$ is defined to be the graph $G \x H$ whose vertex set is the cartesian product, $V(G) \x V(H)$, and $(g,h)(g',h') \in E(G \x H)$ whenever $gg' \in E(G),\ hh' \in E(H) $.
	\end{defn}
	
	Let $\cG$ be the category with objects as finite undirected graphs without multiple edges and  morphisms as vertex set maps that takes edges to edges. 
		
	\begin{defn}
		[\cite{fold-in-hom}]For two arbitrary graphs $T$ and $G$, the hom complex, $\Hom(T,G)$, is the polyhedral complex whose cells are indexed by all the functions  $\eta : V(T) \rightarrow 2^{V(G)} \setminus \{\emptyset\}$ such that $\eta(x) \x \eta(y) \subset E(G)$, whenever $x y \in E(T)$. The closure of a cell $\eta$ consists of all cells $\eta'$, satisfying $\eta'(v) \subset \eta(v)$, for all $v \in V(G)$.
		
	\end{defn}
	\noindent Throughout this paper, we will use the notation of polytopes to mean cells of $\Hom(T,G)$. For a polytope $\eta \in \Hom(T,G)$, the image of $\eta$ is defined to be, $Im(\eta) := \displaystyle\cup_{t \in V(T)} \eta(t)$.
	In this section, we always consider connected loopless graphs unless stated otherwise. 

	For a given graph $G$ and a vertex $v \in V(G)$, we let $\N_G(v)$ denote the set of neighbours of $v$ in $G$, and for a set $A \subset V(G)$, $\N_G(A) = \bigcup_{a \in A} \N_G(v)$. Whenver there is no scope of confusion, the subscript from the notation of neighbourhood set is dropped. {\it Neighbourhood complex} of graph $G$, $\NC(G)$, is the simplicial complex whose simplices are formed by those subsets of vertex set of $G$ that have a common neighbour.
	
 	By definition of neighbourhood complex of a graph $G$, two vertices  $u,v \in V(G)$ form a 1-simplex, denoted by $\{u,v\}$, in $\NC(G)$ if and only if $u$ and $v$ have a common neighbour in $G$. That is, there exists a path of length 2 connecting $u$ to $v$ in $G$. Therefore, two vertices lie in the same path connected component of $\NC(G)$ if and only if there exists an even length path in $G$ connecting these two vertices. Following this idea, it is clear that all the vertices of an odd cycle of $G$ belong to the same path connected component of $\NC(G)$.
 	
 	\begin{lemma} \cite{lov-kneser} \label{bipartite-disconn charac}
 		A loopless connected graph  $G$ is bipartite if and only if $\NC(G)$ is disconnected.
 	\end{lemma}
 	
 	Let $G$ be any graph and $v \in V(G)$. The {\it degree of vertex} $v$ is defined as the cardinality of neighbourhood set of $v$, $|\N_G(v)|$ if $v$ is not looped and  $|\N_G(v)| + 1$ otherwise. The degree of $v$ is denoted as $d(v)$.
 	
 	\begin{thm}\label{fundamental grp comparison}
 		Let $G$ be a simple non-bipartite graph with $C_p$, a cycle in $G$ and $c \in C_p $ be such that $d(c) = 2$, $b \in \N_G(c)$. Then $\pi_1(\NC(G-bc),c) \text{ is a subgroup of }\pi_1(\NC(G),c).$
 	\end{thm}
 
 	In order to prove Theorem \ref{fundamental grp comparison}, we first establish some results about homotopy groups of simplicial complexes.
 	
	For a topological space $X$, $\pi_0(X)$ denotes the path connected components of $X$. Let $X$ and $Y$ be two topological spaces and $f,g : X\to Y$ be two continuous functions. The map $f$ is said to be homotopic to $g$ if there exists a continuous function $F : X \x [0,1] \to Y$ satisfying $F_{|X \x \{0\}} = f$ and  $F_{|X \x \{1\}} = g$. Let $\mathbb{S}^n$ be the $n$- dimensional sphere, then it is the boundary of $(n+1)$-dimensional disc, $\mathbb{D}^{n+1}$. For a topological space $X$ with a base point $x$, the $n^{th}$-homotopy group $\pi_n(X,x)$ is defined to be the collection of homotopy classes of maps from $\mathbb{S}^n \to X$. One often writes $[X,Y]$ to mean the homotopy class of maps from the space $X$ to $Y$. If a continuous function $f: \mathbb{S}^n \to X$ can be extended continuously to $\tilde{f} : \mathbb{D}^{n+1} \to X$ then $f$ is called homotopically trivial. The $n^{th}$-homotopy group of $X$ is trivial if every $f: \mathbb{S}^n \to X$ is homotopically trivial. The space $X$ is called $k$-connected if $\pi_n(X,x)$ is a trivial group  for all $0 \le n \le k$ and the connectivity of $X$, $\conn X$ is the largest $k$ such that $X$ is $k$-connected.

	Let $S$ be a  geometric simplicial complex. We define a 1-chain in $S$ is a finite sequence of 0-simplices $v_0 v_1 \dots v_s$ such that $\{v_i,v_{i+1}\}$ is a 1-simplex in $S$ for all $i = 0, \dots, s-1$. A 1-chain is closed if $v_0 = v_s$ and such closed 1-chains are also referred to as loops based at $v_0$. 
	
	A constant loop based at $v_0$ is a sequence $v_0 v_1 \dots v_s$ such that $v_i = v_0$ for all $i = 0, \dots,s $. A loop based at $v_0$ is said to be homotopically trivial in $S$ if it is homotopic to the constant loop based at $v_0$. We define a closed 1-chain as simple if $v_0$ occurs exactly once, that is, $v_i \ne v_0$ for $0 < i < s$.

	For a simplicial complex $S$, the n-skeleton, $S_n$, of $S$ is the subcomplex of $S$ that contains all $k$-simplices of $S$, for $k \le n$. 
	
	\begin{defn}
		Let $S$ and $T$ be two geometric simplicial complexes. A continuous map $f : S \to T$ is called a simplicial map if for any $k$-simplex $\sigma = \{s_0,s_1, \dots, s_k\}$ of $S$, $f(\sigma)$ is a simplex $\{f(s_0),f(s_1), \dots, f(s_k)\}$ of $T$.
	\end{defn}

	For an $n$-simplex $\sigma = \{x_0,x_1, \dots, x_n\}$ of a simplicial complex $X$, we define a point $x \in \sigma$ to be an interior point of $\sigma$ if $x$ is not one of the $x_i$, for $i = 0, 1, \dots, n$. A point $x \in X$ is called an interior point of $X$ if it is an interior point of some simplex of $X$.

	\begin{lemma}\label{contracting extension map from S2}
		Let $S$ be a geometric simplicial complex and $\alpha : \mathbb{S}^1 \to S$ be a closed nullhomotopic 1-chain in $S$. If the extension $\sigma: \mathbb{D}^2 \to S$, of $\alpha$, is such that $Im (\sigma) \subset S_1$, that is, $\sigma$ is contained in the 1-skeleton of $S$, then every interior point of $\alpha$ is repeated at least once.
	\end{lemma}
	
	\begin{proof}
		We prove it by contradiction. Considering the 2-ball $\mathbb{D}^2$ as a simplicial complex, by simplicial approximation theorem, there exists a simplicial map $\sigma_s : Y \to S$, where $Y$ denotes the $n$-th barycentric subdivision of  $\mathbb{D}^2$ for a large enough $n$. Then, $\sigma_s$ is homotopic to $\sigma$, and  $\sigma_s|_{bd(Y)}$ gives a simplicial map $\alpha_s : bd(Y) \to S$ such that $\alpha_s$ is homotopic to $\alpha$.
		
		Since $S$ is a simplicial complex, the intersection of any two simplices of $S$ is again a simplex of $S$ and a face of both the intersecting simplices. Therefore, if no 1-simplex of $\alpha_s$ is repeated, then at least three consecutive 1-simplices of $\sigma_s$ are unrepeated. Without any loss of generality, we can assume that $\alpha_s$ contains a subdivision of a hollow triangle, and hence $\sigma_s$ contains a subdivision of a hollow triangle. Let $\eta$ be a minimal length closed 1-chain in $Im(\sigma_s)$ containing a subdivision of a hollow triangle.
		
		Since $\sigma_s$ is a simplicial map, image of any two intersecting simplices under $\sigma_s$ intersects. Let $A_1$ be the inverse image of $\eta$ under $\sigma_s$, then $A_1$ contains intersecting 2-simplices that form a annulus like structure (please refer to Figure \ref{D2 to S1 map contradiction}). As a simplicial complex in itself, $A_1$ has a boundary that does not enclose any 2-simplex of $A_1$, say $\alpha_1$.
		\begin{figure}[ht] 
			\tikzstyle{black1}=[circle, draw, fill=black!100, inner sep=0pt, minimum width=4pt]
			\tikzstyle{ver}=[]
			\tikzstyle{extra}=[circle, draw, fill=black!00, inner sep=0pt, minimum width=2pt]
			\tikzstyle{edge} = [draw,thick,-]
			\tikzstyle{light} = [draw, gray,-]
			\centering
			
			\begin{tikzpicture}[scale=0.7] 
			
				\begin{scope}[shift={(-5,0)}]
					\draw[-, black!50,fill=gray!40] (-.5,1) -- (-.8,3) -- (0,2) -- (-.5,1);
					\draw[-, black!50,fill=gray!40] (0,2) -- (2,2) -- (1,1) -- (0,2);
					\draw[-, black!50,fill=gray!40] (2,2) -- (2.5,1) -- (1,1) -- (2,2);
					\draw[-, black!50,fill=gray!40] (2.5,1) -- (2,-1) -- (1,-1) -- (2.5,1);
					\draw[-, black!50,fill=gray!40] (1,-1) -- (.5,-2) -- (0,-1) -- (1,-1);
					\draw[-, black!50,fill=gray!40] (0,-1) -- (.5,-2) -- (-.5, -2) -- (0,-1);
					\draw[-, black!50,fill=gray!40] (0,-1) -- (-.5,-2) -- (-1, -1.5) -- (0,-1);
					\draw[-, black!50,fill=gray!40] (-1,-1.5) -- (-1,0) -- (-2,-1) -- (-1,-1.5);
					\draw[-, black!50,fill=gray!40] (-1,0) -- (-1.8,1) -- (-.5,1) -- (-1,0);
					\draw[-, black!90] (0,2) -- (1,1) -- (2.5,1) -- (1,-1) -- (0,-1) -- (-1,-1.5) -- (-1,0) -- (-.5,1) -- (0,2);
					
					\node[below] at (0,-2.5) {$(a)$};
				\end{scope}
				
				\begin{scope}[shift={(5,0)}]
					\draw[-, black!50,fill=gray!40] (-.5,1) -- (-.8,3) -- (0,2) -- (-.5,1);
					\draw[-, black!50,fill=gray!40] (0,2) -- (2,2) -- (1,1) -- (0,2);
					\draw[-, black!50, fill=gray!40] (2,2) -- (2.5,1) -- (1,1) -- (2,2);
					\draw[-, black!50,fill=gray!40] (2.5,1) -- (2,-1) -- (1,-1) -- (2.5,1);
					\draw[-, black!50,fill=gray!40] (1,-1) -- (.5,-2) -- (0,-1) -- (1,-1);
					\draw[-, black!50,fill=gray!40] (0,-1) -- (.5,-2) -- (-.5, -2) -- (0,-1);
					\draw[-, black!50,fill=gray!40] (0,-1) -- (-.5,-2) -- (-1, -1.5) -- (0,-1);
					\draw[-, black!50,fill=gray!40] (-1,-1.5) -- (-1,0) -- (-2,-1) -- (-1,-1.5);
					\draw[-, black!50,fill=gray!40] (-1,0) -- (-1.8,1) -- (-.5,1) -- (-1,0);
										
				
					\draw[-, black!50,fill=gray!40] (-1,0) -- (0,0) -- (-.5,1);
					\draw[-, black!50,fill=gray!40] (-.5,1) -- (0,0) -- (0,2);
					\draw[-, black!50,fill=gray!40] (0,2) -- (0,0) -- (1,1);
					\draw[-, black!50,fill=gray!40] (1,1) -- (.5,0) -- (2.5,1);
					\draw[-, black!50,fill=gray!40] (.5,0) -- (1,-1) -- (2.5,1);
					\draw[-, black!50,fill=gray!40] (0,-1) -- (.5,-.5) -- (1,-1);
					\draw[-, black!50,fill=gray!40] (0,-1) -- (-.5,-.5) -- (-1,-1.5);
					\draw[-, black!50,fill=gray!40] (-1,-1.5) -- (-.5,-.5) -- (-1,0);
					
					\draw[-,black!90] (0,2) -- (1,1) -- (2.5,1) -- (1,-1) -- (0,-1) -- (-1,-1.5) -- (-1,0) -- (-.5,1) -- (0,2);
					
					\draw[edge] (-1,0) -- (0,0)-- (1,1) -- (.5,0) -- (1,-1) -- (.5,-.5) -- (0,-1) -- (-.5, -.5) -- (-1,0);
					\node[below] at (0,-2.5) {$(b)$};
				\end{scope}
			\end{tikzpicture}
			\caption{\footnotesize The shaded region in $(a)$ denotes an annulus like structure $A_1$ described in the proof of Lemma \ref{contracting extension map from S2}, with dark line illustrating the boundary $\alpha_1$. The figure in $(b)$ shows the next iteration where $A_2$ is obtained  from $A_1$ of $(a)$, with darkest line showing the boundary $\alpha_2$.}
			\label{D2 to S1 map contradiction}
		\end{figure}
		Let $A_2$ be the collection of 2-simplices of $Y-A_1$ with a facet belonging to $\alpha_1$. For any 2-simplex $\tau \in A_2$, the value of $\sigma_s$ on its facet determines the value of $\tau$, and $\sigma_s$ being a simplicial map implies that $\sigma_s(\tau) \in \eta$. Note that $A_2$ also forms an annulus like structure with possibly few pinches along the two boundaries. Let $\alpha_2$ be the boundary of $A_2$ away from the boundary of $Y$. We repeat the same argument for $A_2$, and so on. Since $Y$ is a finite simplicial complex, the process terminates after finitely many iterations with a 2-simplex with two of its facets being mapped to different 1-simplices of $\eta$, a contradiction that $\sigma_s$ is a simplicial map.
	\end{proof}

	Similar arguments can be used to prove a more general statement, that is:
	
	\begin{prop}  \label{contracting extension map}
		Let $S$ be a geometric simplicial complex and $\alpha : \mathbb{S}^n \to S$ be a nullhomotopic continuous map such that for any extension $\sigma: \mathbb{D}^{n+1} \to S$, of $\alpha$, $Im (\sigma) \subset S_n$, that is, $\sigma$ is contained in the n-skeleton of $S$, then every interior point of facets in $\alpha$ is repeated at least once.
	\end{prop}

	\begin{proof}[Proof of Theorem \ref{fundamental grp comparison}]
		By hypothesis, $G$ is non-bipartite and hence $\NC(G)$ is connected by Lemma \ref{bipartite-disconn charac}. Let $\N_G(c) = \{a,b\}$. If $c$ belongs to a $C_4= (a,c,b,d)$ in $G$, then $\N_G(c) = \{a,b\} \subset \N_G(d)$. Therefore $c$ folds to $d$ and $\NC(G-c)$ is homotopy equivalent to $\NC(G)$ and the result then follows. 
		
		We, therefore,  assume that the vertex $c$ does not fold in $G$ and $c $ does not belong to any $C_4$ in $G$. Let $\N_G(a) = \{a_1,a_2, \dots, a_k, c\}$ and $\N_G(b) = \{b_1,b_2, \dots, b_r, c\}$. Let $A$ and $B$ denote the neighbour set of $a$ and $b$ in $G$ respectively. Since $c$ is a degree 2 vertex that belongs to a cycle, its neighbours $a$ and $b$ also belong to the same cycle and hence $k,r \ge 1$. From now on, we drop the subscript $G$ from $\N_G$.
		Since $c \in V(G)$ does not fold  in $G$, $(A \cap B) - \{c\} = \emptyset$.

		Consider the graph $G' = G - bc$. 
		Since $G'$ is a subgraph of $G$, $\NC(G')$ is a subcomplex of $\NC(G)$. Let $f : \NC(G') \to \NC(G)$ be the inclusion map and $f_{\#} : \pi_1(\NC(G'),c) \to \pi_1(\NC(G),c)$ be the group homomorphism induced by $f$. To establish the statement, it suffices to prove that $f_{\#}$ is an injective homomorphism.
				
		Let $\alpha \in \pi_1(\NC(G'),c)$ be a closed simple 1-chain such that $f_{\#}(\alpha)$ is homotopically trivial in $\NC(G)$. Then $\alpha = c v_1 v_2 \dots v_t c $ is a sequence of vertices of $G'$ such that the first and the last vertex is $c$ and any two consecutive vertices form a 1-simplex in $\NC(G')$ and hence have a common neighbour in $G'$. 
		Since in $\NC(G')$, the vertex $c$ forms 1-simplex only with neighbours of $a$, namely, $\{a_1, a_2, \dots, a_k\}$, we can see that $v_1, v_t \in \{a_1, a_2, \dots, a_k\}$. 
		
		Using simplicial approximation, a closed 1-chain in a simplicial complex $X$ can be interpreted as a simplicial map from a triangulation of $\mathbb{S}^1$ to $X$. Given that $\alpha : \mathbb{S}^1 \to \NC(G)$ is homotopically trivial in $\NC(G)$, there exists an extension $\sigma_{\alpha} : \mathbb{D}^2 \to \NC(G)$, that is $bd(\sigma_{\alpha}) = \alpha$. In the context of simplicial complex, the subcomplex $\sigma_{\alpha} \subset \NC(G)$ gives a triangulation of $\mathbb{D}^2$. If $\sigma_{\alpha}$ is a subcomplex of $\NC(G')$ then it gives an extension of $\alpha$ to $\mathbb{D}^2$ in $\NC(G')$ and $\alpha$ is homotopically trivial in $\NC(G')$ and there is nothing to prove. Also, if $\sigma_{\alpha}$ belongs to 1-skeleton of $\NC(G)$, then the result follows from Proposition \ref{contracting extension map}. So without loss of generality, assume that $\sigma_{\alpha} $ contains a 2-simplex $\{c,b_1,b_2\}$ 
		
		Note that any 2-simplex of $\NC(G)$ not containing $c$ belongs to $\NC(G')$. If the 2-simplex $\{c,b_1,b_2\}$ belongs to the interior of $\sigma_{\alpha}$, then there exists a closed 1-chain $\eta$ homotopic to $\alpha$ in $\NC(G')$ such that boundary of $\sigma_{\eta}$ is $\eta$, and $\eta$ contains the 1-simplex $\{b_1,b_2\}$.
		Therefore, we assume that if a simple closed 1-chain $\alpha \subset \NC(G')$ is homotopically trivial in $\NC(G)$ and $\sigma_{\alpha}$ contains a 2-simplex $\{c,b_1,b_2\}$, then the 1-simplex $\{b_1,b_2\}$ belongs to $\alpha$. 
		
		\begin{figure}[ht]
			\tikzstyle{black1}=[circle, draw, fill=black!100, inner sep=0pt, minimum width=4pt]
			\tikzstyle{ver}=[]
			\tikzstyle{extra}=[circle, draw, fill=black!00, inner sep=0pt, minimum width=2pt]
			\tikzstyle{edge} = [draw,thick,-]
			\tikzstyle{light} = [draw, gray,-]
			\centering
			
			\begin{tikzpicture}[scale=0.5]
			
			\begin{scope}[shift={(-15,0)}]

			\draw[edge,black] (3,0) -- (1,-1) -- (2.5,-1.5);
			\draw[light] (-3,0) -- (1,-1);
			 
			\node[below] at (0.8,-1) {$c$};
			
			\node[right] at (2.5,1.8) {$v_2$};		
			\node[right] at (1.5,2.8) {$v_3$};							
			\node[left] at (-2.5,1.8) {$v_{i-1}$};										
			\node[left] at (-3,0) {$b_1$};			
			\node[left] at (-2.5,-1.8) {$v_{i+1}$};														
			\node[right] at (3,0) {$a_1$};		
			\node[right] at (2.5,-1.8) {$a_2$};

			\draw[light] (-3,0) -- (-1,0.3) -- (0,1) -- (1,0) -- (1,-1);
			\draw[light] (-2.6,1.5) -- (-1,0.3) -- (1,-1);
			\draw[light] (-2.6,1.5) -- (0,1) -- (1,-1) ;
			\draw[light] (1,0) -- (2,1) -- (3,0);
			\draw[light] (1.5,2.6) -- (2,1) -- (2.6,1.5);
			\draw[light] (-2.6,1.5) -- (0,3) -- (0,1) -- (1.5,2.6) -- (1,0) ;
			
			
			\draw[light] (1,-1) -- (1.5,-2.5) -- (-1,-1.5) -- (-2.5,-1.5);
			\draw[light] (1,-1) -- (-1,-1.5) -- (-3,0);
			\draw[light] (0,-3) -- (-1,-1.5) -- (-1.45,-2.5);
			\draw[light] (1,0) -- (3,0);
			\draw [fill] (-1,-1.5) circle [radius=0.15];						
			\draw [fill] (-1,0.3) circle [radius=0.15];
			\draw [fill] (0,1) circle [radius=0.15];
			\draw [fill] (1,0) circle [radius=0.15];
			\draw[fill] (2,1) circle [radius=0.15];
			
			\foreach \x/\y in 				{0/v_{1},30/v_{2},60/v_{3},90/v_{4},120/v_{5},150/v_{6},180/v_{7},210/v_{8},240/v_{9},270/v_{10},300/v_{11},330/v_{12}}{
				\node[black1] (\y) at (\x:3){};
			}
			
			\node[black1] (v) at (1,-1){};
			
			\foreach \x/\y in {v_{1}/v_{2},v_{2}/v_{3},v_{3}/v_{4},v_{4}/v_{5},v_{5}/v_{6},v_{6}/v_{7},v_{7}/v_{8},v_{8}/v_{9},v_{9}/v_{10},v_{10}/v_{11},v_{11}/v_{12}}{
				\path[edge] (\x) -- (\y);}
			
			\foreach \x/\y in {v_{1}/v_{2},v_{2}/v_{3},v_{3}/v_{4},v_{4}/v_{5},v_{5}/v_{6},v_{6}/v_{7},v_{7}/v_{8},v_{8}/v_{9},v_{9}/v_{10},v_{10}/v_{11},v_{11}/v_{12}}{
				\path[edge,black] (\x) -- (\y);}
			
			\node[below] at (0,-4) {(a)};
				
			\end{scope}

			
			\begin{scope}[shift={(-7,0)}]
			\foreach \x/\y in {0/v_{1},30/v_{2},60/v_{3},90/v_{4},120/v_{5},150/v_{6},180/v_{7},210/v_{8},240/v_{9},270/v_{10},300/v_{11},330/v_{12}}{
				\node[black1] (\y) at (\x:3){};
			}
			
			\node[black1] (v) at (1,-1){};
			\foreach \x/\y in {v_{1}/v_{2},v_{2}/v_{3},v_{3}/v_{4},v_{4}/v_{5},v_{5}/v_{6},v_{6}/v_{7},v_{7}/v_{8},v_{8}/v_{9},v_{9}/v_{10},v_{10}/v_{11},v_{11}/v_{12}}{
				\path[light] (\x) -- (\y);}
			\foreach \x/\y in {v_{1}/v,v_{7}/v,v_{12}/v}{
			\path[light] (\x) -- (\y);}
			
		\foreach \x/\y in {v_{1}/v_{2},v_{2}/v_{3},v_{3}/v_{4},v_{4}/v_{5},v_{5}/v_{6},v_{6}/v_{7}}{
			\path[edge] (\x) -- (\y);}
			\draw[edge] (3,0) -- (1,-1) -- (-3,0);
			\node[below] at (0.8,-1) {$c$};

			\node[right] at (2.5,1.8) {$v_2$};		
			\node[right] at (1.5,2.8) {$v_3$};							
			\node[left] at (-2.5,1.8) {$v_{i-1}$};										
			\node[left] at (-3,0) {$b_1$};			
			\node[left] at (-2.5,-1.8) {$v_{i+1}$};														
			\node[right] at (3,0) {$a_1$};		
			\node[right] at (2.5,-1.8) {$a_2$};		
			\draw[light] (1,0) -- (3,0);
			\node [below] at (-1,0.45) { $b_{i_1}$};
			\draw [fill] (-1,0.3) circle [radius=0.15];
			\draw [fill] (0,1) circle [radius=0.15];
			\draw [fill] (1,0) circle [radius=0.15];
			\draw[fill] (2,1) circle [radius=0.15];
			
			\draw[light] (-3,0) -- (-1,0.3) -- (0,1) -- (1,0) -- (1,-1);
			\draw[light] (-2.6,1.5) -- (-1,0.3) -- (1,-1);
			\draw[light] (-2.6,1.5) -- (0,1) -- (1,-1) ;
			\draw[light] (1,0) -- (2,1) -- (3,0);
			\draw[light] (1.5,2.6) -- (2,1) -- (2.6,1.5);
			\draw[light] (-2.6,1.5) -- (0,3) -- (0,1) -- (1.5,2.6) -- (1,0) ;
				
			
			\draw [fill] (-1,-1.5) circle [radius=0.15];			
			\draw[light] (1,-1) -- (1.5,-2.5) -- (-1,-1.5) -- (-2.5,-1.5);
			\draw[light] (1,-1) -- (-1,-1.5) -- (-3,0);
			\draw[light] (0,-3) -- (-1,-1.5) -- (-1.45,-2.5);
			\node[below] at (0,-4) {(b)};			
			\end{scope}

			
			\begin{scope}[shift={(1,0)}]
			\foreach \x/\y in {0/v_{1},30/v_{2},60/v_{3},90/v_{4},120/v_{5},150/v_{6},180/v_{7},210/v_{8},240/v_{9},270/v_{10},300/v_{11},330/v_{12}}{
				\node[black1] (\y) at (\x:3){};
			}
			
			\node[black1] (v) at (1,-1){};
			\foreach \x/\y in {v_{1}/v_{2},v_{2}/v_{3},v_{3}/v_{4},v_{4}/v_{5},v_{5}/v_{6},v_{6}/v_{7},v_{7}/v_{8},v_{8}/v_{9},v_{9}/v_{10},v_{10}/v_{11},v_{11}/v_{12}}{
				\path[light] (\x) -- (\y);}
			\foreach \x/\y in {v_{1}/v,v_{7}/v,v_{12}/v}{
				\path[light] (\x) -- (\y);}
			
			\foreach \x/\y in {v_{1}/v_{2},v_{2}/v_{3},v_{3}/v_{4},v_{4}/v_{5},v_{5}/v_{6},v_{6}/v_{7}}{
				\path[edge] (\x) -- (\y);}

			\node[below] at (0.8,-1) {$c$};

			\node[right] at (2.5,1.8) {$v_2$};		
			\node[right] at (1.5,2.8) {$v_3$};							
			\node[left] at (-2.5,1.8) {$v_{i-1}$};										
			\node[left] at (-3,0) {$b_1$};			
			\node[left] at (-2.5,-1.8) {$v_{i+1}$};														
			\node[right] at (3,0) {$a_1$};		
			\node[right] at (2.5,-1.8) {$a_2$};		
			
			\node [below] at (-1,0.45) { $b_{i_1}$};
			\draw [fill] (-1,0.3) circle [radius=0.15];
			\node [above] at (-0.43,1) { $b_{i_2}$};
			\draw [fill] (0,1) circle [radius=0.15];
			\draw [fill] (1,0) circle [radius=0.15];
			\draw[light] (1,0) -- (3,0);			
			\draw[fill] (2,1) circle [radius=0.15];
			
			\draw[light] (-3,0) -- (-1,0.3) -- (0,1) -- (1,0) -- (1,-1);
			\draw[light] (-2.6,1.5) -- (-1,0.3) -- (1,-1);
			\draw[light] (-2.6,1.5) -- (0,1) -- (1,-1) ;
			\draw[light] (1,0) -- (2,1) -- (3,0);
			\draw[light] (1.5,2.6) -- (2,1) -- (2.6,1.5);
			\draw[light] (-2.6,1.5) -- (0,3) -- (0,1) -- (1.5,2.6) -- (1,0) ;
			
			\draw[edge] (3,0) -- (1,-1) -- (-1,0.3) -- (-3,0);
				
			
			\draw [fill] (-1,-1.5) circle [radius=0.15];			
			\draw[light] (1,-1) -- (1.5,-2.5) -- (-1,-1.5) -- (-2.5,-1.5);
			\draw[light] (1,-1) -- (-1,-1.5) -- (-3,0);
			\draw[light] (0,-3) -- (-1,-1.5) -- (-1.45,-2.5);
			\node[below] at (0,-4) {(c)};			
			\end{scope}

			
			\begin{scope}[shift={(9,0)}]
			\foreach \x/\y in {0/v_{1},30/v_{2},60/v_{3},90/v_{4},120/v_{5},150/v_{6},180/v_{7},210/v_{8},240/v_{9},270/v_{10},300/v_{11},330/v_{12}}{
				\node[black1] (\y) at (\x:3){};
			}
			
			\node[black1] (v) at (1,-1){};
			\foreach \x/\y in {v_{1}/v_{2},v_{2}/v_{3},v_{3}/v_{4},v_{4}/v_{5},v_{5}/v_{6},v_{6}/v_{7},v_{7}/v_{8},v_{8}/v_{9},v_{9}/v_{10},v_{10}/v_{11},v_{11}/v_{12}}{
				\path[light] (\x) -- (\y);}
			\foreach \x/\y in {v_{1}/v,v_{7}/v,v_{12}/v}{
				\path[light] (\x) -- (\y);}
			\foreach \x/\y in {v_{1}/v_{2},v_{2}/v_{3},v_{3}/v_{4},v_{4}/v_{5},v_{5}/v_{6},v_{6}/v_{7}}{
				\path[edge] (\x) -- (\y);}
			
			\node[below] at (.8,-1) {$c$};

			\node[right] at (2.5,1.8) {$v_2$};		
			\node[right] at (1.5,2.8) {$v_3$};							
			\node[left] at (-2.5,1.8) {$v_{i-1}$};										
			\node[left] at (-3,0) {$b_1$};			
			\node[left] at (-2.5,-1.8) {$v_{i+1}$};														
			\node[right] at (3,0) {$a_1$};		
			\node[right] at (2.5,-1.8) {$a_2$};		
			
			\node [below] at (-1,0.45) { $b_{i_1}$};
			\draw [fill] (-1,0.3) circle [radius=0.15];
			\node [above] at (-0.43,1) { $b_{i_2}$};
			\draw [fill] (0,1) circle [radius=0.15];
			\node [right] at (1,-0.3) { $b_{i_j}$};
			\draw [fill] (1,0) circle [radius=0.15];
			
			\node [left] at (2,1) {$x$};
			\draw[fill] (2,1) circle [radius=0.15];
			\draw[dotted,thick] (1,0) -- (3,0);			
			\draw[light] (-3,0) -- (-1,0.3) -- (0,1) -- (1,0) -- (1,-1);
			\draw[light] (-2.6,1.5) -- (-1,0.3) -- (1,-1);
			\draw[light] (-2.6,1.5) -- (0,1) -- (1,-1) ;
			\draw[light] (1,0) -- (2,1) -- (3,0);
			\draw[light] (1.5,2.6) -- (2,1) -- (2.6,1.5);
			\draw[light] (-2.6,1.5) -- (0,3) -- (0,1) -- (1.5,2.6) -- (1,0) ;
			
			\draw[edge] (3,0) -- (1,-1) -- (0,1) -- (-1,0.3) -- (-3,0);
				
			
			\draw [fill] (-1,-1.5) circle [radius=0.15];			
			\draw[light] (1,-1) -- (1.5,-2.5) -- (-1,-1.5) -- (-2.5,-1.5);
			\draw[light] (1,-1) -- (-1,-1.5) -- (-3,0);
			\draw[light] (0,-3) -- (-1,-1.5) -- (-1.45,-2.5);
			\node[below] at (0,-4) {(d)};			
			\end{scope}
			
			\end{tikzpicture}
			
			\caption{\footnotesize An example to illustrate the proof of the Theorem \ref{fundamental grp comparison}. In (a), $\alpha$ is shown in black as the boundary of $\sigma_{\alpha}$. In (b), $\alpha_1$ is shown in black as obtained from $\alpha$ such that $\sigma_{\alpha_1}$ is contained in $\sigma_{\alpha}$. Similarly (c) records the next step. Lastly, (d) shows  the final iteration, where we ought to have $\{c,b_{i_j},a_1\}$ as a 2-simplex shown by a dotted line.}
		\end{figure}
		
		Let $i$ be the smallest integer such that $v_i \in \{b_1, b_2, \dots , b_k\}$, so that the simple closed 1-chain $\alpha = c a_1 v_2 v_3 \dots v_{i-1} v_i v_{i+1} \dots v_{t-1} a_2 c$ with $v_i = b_1$, say,  is homotopically trivial in $\NC(G)$ but not in $\NC(G')$. 
		We will show that this is not possible by exhibiting a simple closed 1-chain in $\sigma_{\alpha}$ which is not homotopically trivial in $\NC(G)$. 
		
		Take $\alpha_1 = c a_1 v_2 \dots v_{i-1} b_1 c$, then $\alpha_1$ is a subcomplex of $\sigma_{\alpha}$ and hence also homotopically trivial in $\NC(G)$. Let $ \sigma_{\alpha_1}$ denote the extension of $\alpha_1$ to $\mathbb{D}^2$. 
		Note that $\N(A) \cap \N(B) = \emptyset$, thus $\{c, a_i, b_j\}$ cannot be a 2-simplex in the neighbourhood complex of $G$. If $\{c,b_1,x\}$ is a 2-simplex, then $x \in \{b_1, \dots , b_k\}$. 
		Therefore $\alpha$ being homotopically trivial implies that there exists $b_{i_1} \in \{b_1, \dots , b_k\} $ such that 2-simplex $\{c, b_1, b_{i_1}\} \subset \sigma_{\alpha_1} $. Now consider $\alpha_2 = c a_1 v_2 \dots b_1 b_{i_1} c \subset \sigma_{\alpha_1}$, then $\alpha_2$ is also homotopically trivial in $\NC(G)$. Similar argument as above gives that there exists $b_{i_2} \in \{b_1, \dots , b_k\} $ such that 2-simplex $\{c, b_{i_1}, b_{i_2}\} \subset \sigma_{\alpha_2}$. Arguing similarly, we get $\alpha_j = c a_1 v_2 \dots b_1 b_{i_1} b_{i_2} \dots b_{i_j} c \subset \sigma_{\alpha_{j-1}} \subset \sigma_{\alpha}$ such that $\alpha_j$ is also homotopically trivial in $\NC(G)$. Since $G$ is finite and hence the set $\{b_1, \dots , b_k\}$ of neighbours of $b$, this process  terminates in a finite number of steps. 
		During this process, all the $v_i$'s gets replaced by elements from $A\cup B$ and we get a closed 1-chain  $\{c, a_i,b_j\}$, which is not homotopically trivial in $\NC(G)$, and that is a contradiction. Therefore, if $\alpha$ is homotopically trivial in $\NC(G)$, then  it is homotopically trivial in $\NC(G')$ also.	
	\end{proof}	
	
\noindent Note that $G'$ considered in the above theorem folds to $G - c$.	Since $c$ belongs to $C_n$ in $G$, $G' = G-bc$ is connected for a degree 2 vertex $c$ of $G$, if $\chi(G-c) > 2$, then $\chi(G) = \chi(G-c)$.
 	
 	\begin{cor}\label{G and G' equivalence}
 		For a finite connected simple non-bipartite graph, if $c \in V(G)$ belongs to a cycle in $G$ and $d(c) = 2$. Then 
 		$$ \conn \NC(G-c) \ge \conn \NC(G).$$
 		Furthermore, if $G-c$ contains an odd cycle then $\chi(G-c) = \chi(G)$. Else, $\chi(G-c) = 2$ implying $\chi(G) = 3$.		
		
 	\end{cor}
	\begin{proof}
	To establish the claim, we need to only show that if $\pi_k(\NC(G))$ is trivial for $k\geq 2$, then $\pi_k(\NC(G-bc))$ is trivial.   	Let $\alpha : \S^n \to \NC(G-c) $ be a simplicial map for $n\geq 2$. Then by assumption we know it extends to a simplicial map $\alpha': \D^{n+1} \to \NC(G)$. Since we assume that $a,b$ have only one vertex $c$ in common, we know that the  $1$-simplex $\{a,b\}$ is a part of $\NC(G)$ but not a face of higher boundary in $\NC(G)$ and does not exist in $\NC(G-bc)$. If image of $\alpha'$ includes this $1$-simplex $\{a,b\}$, then by Proposition \ref{contracting extension map} we can see that we can get a new map $\alpha'' : \D^{n+1} \to \NC(G)-\{a,b\} $. Now it is possible that $\N_G(b)$ is in the image of $\alpha'$. Let  the simplex ${c}\cup \N_{G-c}(b)$, be in the image of $\alpha'$. Since image of $\alpha$ does not intersect with any simplex in ${c}\cup \N_{G-c}(b)$ containing $c$, we can contract the simplices containing $c$ down to $\N_{G-c}(b)$. Thus defining an $\alpha'': D^{n+1}\to \NC(G-c)$.  Hence proved. 
 	\end{proof}
	    Let $G$ be a graph, then its minimum degree, $\delta(G)$, is defined to be the minimum of degrees taken over all the vertices of $G$. Let $\C^{(2)}$ be the class of graphs whose minimum degree is 2. 	Then Theorem \ref{fundamental grp comparison} can be applied to any  non-bipartite graph $G \in \C^{(2)}$. Note that $G$ does not have any hanging (degree 1) vertices. Also, since the minimum degree, $\delta(G) = 2$, every vertex belongs to some cycle in the graph.  Let $v \in V(G)$ be such that $d(v) = 2$. Since $v$ belongs to a cycle, Theorem \ref{fundamental grp comparison} applies to it, and we get the graph $G-v$ such that $\pi_1(N(G-v))$ is a subgroup of $\pi_1(N(G))$, and either $G-v$ is bipartite or $\chi(G-v) = \chi(G)$.

\vspace{0.2cm}

Let $(G,c) =(G_0,c_0)$ be a connected non-bipartite graph with vertex $c_0 \in V(G_0)$ of degree two that belongs to some cycle $C_p \subset G$. Define $G_1$ to be the subgraph $G_0 - \{c_0\}$,  where $G_0 - \{c_0\}$ denotes the subgraph of $G_0$ induced by the vertex set $V(G_0) - \{c_0\}$.  Then by Corollary \ref{G and G' equivalence}, $\conn \NC(G_1) \ge \conn \NC(G_0)$. Since $G_0$ is non-bipartite, $\chi(G_0) \ge 3$. If $G_1$ does not contain any odd cycle then it is bipartite and hence has chromatic number 2, thereby implying that the chromatic number of $G_0$ is 3 and we stop. On the other hand,  if $G_1$ contains an odd cycle, then again by Corollary \ref{G and G' equivalence},  $\chi(G_1)= \chi(G_0)$. In this case, if $G_1$ contains a degree two vertex say $c_1$, then we repeat as above to obtain a smaller graph $G_2 = G_1 - \{c_1\}$; and as before if $G_2$ is bipartite then $2 = \chi(G_2) = \chi(G_1) - 1 = \chi(G_0) - 1$ implying that $\chi(G_0 = 3)$. Otherwise $\chi(G_2) = \chi(G_1) = \chi(0)$.

We continue removing degree two vertices from non-bipartite graph as explained above. Since the original graph $G_0$ is finite, the number of iterations is also finite. Suppose $n$ vertices are removed during this iterative process, then using Lov\'{a}sz' bound, we have  $$\chi(G) = \chi(G_0) = \chi(G_n) \ge \conn \NC(G_n) + 3.$$ Since $G_n$ is a much smaller graph compared to that of $G_0$, computation of $\conn \NC(G_n)$ is expected to be easier than $\conn \NC(G)$.

\vspace{0.2cm}

The result in Theorem \ref{fundamental grp comparison} deals with the fundamental group of the neighbourhood complex of the subgraph $G-c$ as compared to that of $G$. In the cases when $\pi_1(\NC(G))$ is non-trivial but $\pi_1(\NC(G-c))$ is, it improves the Lov\'{a}sz' lower bound  by at most 1. In the same vein, we prove the next result where removing degree $3$ vertices  can improve the bound by $2$ in good cases.

 	\begin{thm} \label{higher homotopy grp comparison}
 		Let $G$ be a non-bipartite graph with a degree three non-looped vertex $v \in V(G)$. Then
 		\begin{enumerate}
 			\item The second homotopy group $\pi_2(\NC(G-v))$ is a subgroup of $\pi_2(\NC(G))$.
 			
 			\item If for every pair of two neighbours $x,y$ of $v$, there exists an even length path joining $x$ and $y$ in $G-v$, then 
 			$$\pi_0(\NC(G-v)) = \pi_0(\NC(G)).$$
 			Furthermore, for some $x,y \in \N_G(v)$, let $P = x p_1 p_2 \dots p_{2n-1} y$ be a shortest distance path from $x$ to $y$ in $G-v$. If $n \ge 2$, then 
 			$$\pi_1(\NC(G-v)) \text{  is a subgroup of } \pi_1(\NC(G)).$$ 
 		\end{enumerate} 
 	\end{thm}
 	
 	\begin{proof}
 		Let the neighbours, $\N_G(v)$, of $v$ be $\{a,b,c\}$. Without loss of generality, we can assume that $\N_G(a) \cap \N_G(b) \cap \N_G(c) = \{v\}$, otherwise if there exists $x \in \N_G(a) \cap \N_G(b) \cap \N_G(c)$, $x \ne v$, then $v$ folds to $x$. By \cite{kozlov-hom-from-k2-is-neighbourhood-complex,fold-in-hom}, $\NC(G-v)$ is homotopy equivalent to $\NC(G)$, and hence the result holds. 
 		
 		Since $G-v$ is a subgraph of $G$, $\NC(G-v)$ is a subcomplex of $\NC(G)$. Let $f : \NC(G-v) \to \NC(G)$ denote this inclusion, and $f_{\#}: \pi_2(\NC(G-v)) \to \pi_2(\NC(G))$ be the induced group homomorphism.
 		
 		Let $\alpha : \mathbb{S}^2 \to \NC(G-v)$ be a continuous map such that $f_{\#}(\alpha) = f \alpha$ is nullhomotopic. To prove the first part, it suffices to show that $\alpha$ is nullhomotopic. Since $f \alpha : \mathbb{S}^2 \to \NC(G)$ is nullhomotopic, there exists an extension $\sigma : \mathbb{D}^3 \to \NC(G)$. The 3-ball $\mathbb{D}^3$ carries a simplicial complex structure and $\NC(G)$ is a simplicial complex. Therefore, by simplicial approximation theorem, there exists a simplicial map $\sigma_s : Y \to \NC(G)$, where $Y$ denote the n-th barycentric subdivision of $\mathbb{D}^3$ for some large enough $n$. Let $\alpha_s$ be the restriction of $\sigma_s$ to the boundary, $bd(Y)$, of $Y$.
 		
 		If $Im(\sigma_s) $ is contained in $\NC(G-v)$, then $\alpha$ is trivial in $\NC(G-v)$. Therefore, assume the $Im(\sigma_s)$ contains a simplex of $\NC(G)$ that is not in $\NC(G-v)$. Also, if $Im(\sigma_s)$ is contained in the 2-skeleton of $ \NC(G)$, then the result follows from Proposition \ref{contracting extension map}. So, assume that $Im(\sigma_s)$ contains a 3-simplex of $\NC(G)$ that is not in $\NC(G-v)$.
 		
 		Let $A := \N_G(a)$, $B := \N_G(b)$, $C := \N_G(c)$. If $\{v,x,y,z\}$ forms a 3-simplex of $\NC(G)$ that is not in $\NC(G-v)$, then all three $x,y,z$ belong to either $A$, or $B$, or $C$. Since $\sigma|_{bd(\mathbb{D}^3)} = \alpha$ is contained in $\NC(G-v)$, $v$ is an interior point of $Im(\sigma)$. If the 3-simplex generated by $\{v,x,y,z\}$ lies in the interior of $Im(\sigma)$, then nullhomotopicity of $f \alpha$ in $\NC(G)$ implies that there exists an $\eta: \mathbb{S}^2 \to \NC(G)$ such that $\eta$ is homotopic to $ \alpha$ and the 2-simplex generated by $\{x,y,z\}$ lies on the boundary of its extension $\delta: \mathbb{D}^3 \to \NC(G)$. Therefore, we may assume that $\{x,y,z\} \in bd(\sigma) = \alpha$. We now can use similar arguments as in the proof of Theorem \ref{fundamental grp comparison} to complete the proof.
 		
 		To prove the second part, note that whenever two vertices of a graph can be connected by a path of even length, the two vertices belong to the same path connected component of $\NC(G)$. Therefore if there is a path of even length for every pair of two neighbours of $v$, then all three neighbours of $v$ belong to the same path connected component of $\NC(G-v)$, and hence the result.
 		
 		For the rest of the proof, let $\alpha$ be a closed non-nullhomotopic 1-chain in $\NC(G-v)$ containing $a,b,$ and $c$ as its 0-simplices. Let $\alpha = a u_1 u_2\dots u_{k_1} b v_1 v_2 \dots v_{k_2} c w_1 w_2 \dots w_{k_3} a$, where $k_1,k_2,k_3 \ge 0$. Since for some $x,y \in \N_G(v) = \{a,b,c\}$, the distance between $x$ and $y$ is more than 2, at least one of $k_1,k_2,k_3$ is greater than 0.  Suppose $\alpha$ is trivial in $\NC(G)$, then $\alpha_1 = a u_1 u_2\dots u_{k_1} b a$, $\alpha_2 = b v_1 v_2 \dots v_{k_2} c b$, and $\alpha_3 = c w_1 w_2 \dots w_{k_3} a c$ are trivial 1-chains as $\{a,b,c\}$ is a 2-simplex in $\NC(G)$. Let $a,b$ be such that $P = a p_1 p_2 \dots p_{2n-1} b$. If $n\ge 2$,  there does not exist any 2-simplex in $\NC(G)$ containing $a,b$. This implies that $\alpha_1$ cannot be contracted to a point in $\NC(G)$, a contradiction. Therefore,	$\pi_1(\NC(G-v)) \text{  is a subgroup of } \pi_1(\NC(G))$. 
 	\end{proof}

 	\begin{remark}  As before, we can show that  if the $n^{th}$  homotopy group of $\NC(G)$ for some $n\geq 4$ is trivial, then after  removing this degree $3$ vertex $c$ the $\pi_n(\NC(G-c)) =0$ for that $n$.  Let $\C^{(3)}$ denote the class of graphs with minimum degree $3$. Then for any non-bipartite graph $G \in \C^{(3)}$ satisfying hypothesis of Theorem \ref{higher homotopy grp comparison} (2), we can reduce the graph $G$ and obtain a smaller subgraph $G'$ for which the computation of $\NC(G')$ is easier as $G'$ is strictly smaller than $G$, and $\conn \NC(G')$ is at least as much as $\conn \NC(G)$.   
\end{remark}

\begin{remark} Removing degree $2$ and $3$ vertices is especially useful if they are obstructing the connectivity of $\NC(G)$ to be higher. 
	
For instance,  
 consider the graph $G := C_n \displaystyle\sqcup_{K_2} K_m$ for any $n\ge 5$ and $m \ge 4$. Then $G$ has a degree 2 vertex (coming from the $C_n$), say $v$. It can be checked that $\conn \NC(G) = 0$, whereas $\conn \NC(G-v) = m-3 > 0.$ And at the same time, $\chi(G-v) = \chi(G)$, improving the Lov\'{a}sz' bound. Thus, Theorem \ref{fundamental grp comparison} helps improve the lower bound on the chromatic number of $G$.  

Similarly a simple application of Theorem \ref{higher homotopy grp comparison} is as follows. Take pushout of  $K_3 \to K_m$ to get $ K_m \displaystyle\sqcup_{K_3} K_m'$ for $m \ge 6$. Let $a \in V(K_m), b,c \in V(K_m')$ be such that $a,b \notin V(K_m) \cap V(K_m')$ and $c \in V(K_m) \cap V(K_m')$. Take a new vertex $x$ with neighbours $a,b$. Now take another vertex, say $v$ such that $v $ is adjacent to $a,c$ and $x$. Let $G$ be this graph. We first note that $\chi(G-v) = \chi(G)$. Also $\NC(G)$ is 0-connected, whereas the topological connectivity of $\NC(G-v)$ is at least 2 (cf. \cite{CSORBA-chordal}).
\end{remark}	
 	
 	 However, it is  possible to have graphs $G$ with $\chi(G) = 4$ but $\chi(G-v) = 3$ and characterizing  such graphs is difficult.  In such cases it is not clear when this method will give us better bounds for $\chi(G)$ using inequality \eqref{lovasz conjecture}. This indicates that  a degree 4 version of the above theorem may not be very fruitful. 
 	
 	 Note that Theorem \ref{higher homotopy grp comparison} can also be applied to graphs with a degree 3 vertex having the said properties even if the minimum degree of $G$ is two or less, that is, this theorem applies to a general class of graphs.

\vspace{0.2cm}

\section{Reducing computational complexity for hom complex}\label{sec:Dn applications}

	Let  $A$, $B$ and $C$ be graphs and  $f : A \to B $ and $g: A \to C$ be  graph homomorphisms. In the category of graphs, the double mapping cylinder of height $n$  is defined as   $D_n= B \sqcup_f (A \times I_n) \sqcup_g C = (B \sqcup (A \times I_n) \sqcup C)/\sim$, where $\sim$ denotes the equivalence relation $f(a) \sim (a,n)$ and $g(a) \sim (a,0), \text{ and } [x][y] \in E(D_n)$ if there exists some $x_0 \in [x],\ y_0 \in [y]$ such that $\{x_0,y_0\} \in E(B) \cup E(A \times I_n) \cup E(C)$. We write $x$ to denote its equivalence class $[x]$.
	\begin{figure}[H]
		$$ \xymatrix{
			A \ar[r]^f \ar[d]_g	&	B\ar[d]_{j_1}\\
			C \ar[r]^{j_2}			&	D_n
		}$$
		\caption{Double mapping cylinder.}
		\label{htpy}
	\end{figure}
	Given any two graph homomorphisms $f,g$, we now examine the relation between the pushout object and the double mapping cylinder of $f$ and $g$.
 	
 	\begin{lemma} \label{chromatic no. comparison}
 		Let $A,B,C \in \cG$ be three loopless graphs with graph homomorphisms $f : A \to B$ and $g : A \to C$. Let $G \in \cG$ denote the pushout of $f$ and $g$, and $D_n$ be their double mapping cylinder of height $n$. If $G$ is a simple graph, then
		$$\chi(G) \ge \chi(D_n).$$
 	\end{lemma}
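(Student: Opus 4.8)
\emph{Proof sketch.} The plan is to reduce the statement to constructing a single graph homomorphism $h\colon D_n\to G$. Given such an $h$, composing a proper $\chi(G)$-colouring $G\to K_{\chi(G)}$ with $h$ yields a proper colouring of $D_n$, hence $\chi(D_n)\le\chi(G)$. Note that, since $G$ is simple it is loopless, and a homomorphism into a loopless graph forces its source to be loopless; so $D_n$ is loopless and $\chi(D_n)$ is a genuine finite integer. (If one drops the hypothesis that $G$ is simple, then $G$ has a loop, $\chi(G)=\infty$, and there is nothing to prove.)

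To construct $h$ I would use the collapse of the cylinder onto the honest pushout. Recall that the pushout $G$ comes with homomorphisms $i_B\colon B\to G$ and $i_C\colon C\to G$ satisfying $i_B\circ f=i_C\circ g$; write $e\colon A\to G$ for this common homomorphism. Prescribe $h$ on the three pieces of the disjoint union $B\sqcup(A\times I_n)\sqcup C$ whose quotient is $D_n$ by setting $h|_B=i_B$, $h|_C=i_C$, and letting $h$ on $A\times I_n$ be the composite $A\times I_n\xrightarrow{\pi}A\xrightarrow{e}G$, where $\pi$ is the projection to the first factor. The map $\pi$ is a graph homomorphism, since every edge $\{(a,j),(a',j')\}$ of $A\times I_n$ has $\{a,a'\}\in E(A)$, and therefore $e\circ\pi$ is one as well.

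The remaining step, and the only one that requires actual checking, is to verify that these three partial maps are compatible with the gluing relation $\sim$ and carry edges of $D_n$ to edges of $G$; both checks are routine. Compatibility on vertices: on the identification $(a,n)\sim f(a)$ both prescriptions return $e(a)=i_B(f(a))$, and on $(a,0)\sim g(a)$ both return $e(a)=i_C(g(a))$, so $h$ descends to a well-defined vertex map on $D_n$. Edges: by definition every edge of $D_n$ is the $\sim$-class of an edge of $B$, of $C$, or of $A\times I_n$, and since $i_B$, $i_C$ and $e\circ\pi$ are all graph homomorphisms, each such edge is sent to an edge of $G$. Hence $h\colon D_n\to G$ is a well-defined graph homomorphism, and the colouring argument of the first paragraph gives $\chi(D_n)\le\chi(G)$. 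The place where the hypothesis that $G$ is simple is genuinely used is precisely in making $\chi(G)$ (and hence $\chi(D_n)$) finite, so that the inequality is meaningful; the construction of $h$ itself is formal.
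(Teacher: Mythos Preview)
Your proof is correct and essentially identical to the paper's: the paper constructs the colouring $c'\colon D_n\to K_m$ directly, but its $c'$ is precisely your composite $c\circ h$, where $h\colon D_n\to G$ is the collapse map you build from $i_B$, $i_C$, and $e\circ\pi$. Your version is slightly cleaner in that it names $h$ explicitly and separates the construction of the homomorphism from the colouring step.
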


 	\begin{proof}
 		For a loopless graph $X \in \cG$, a $k$-colouring of $X$ is a graph homomorphism from $X \to K_k$.
 		Let $X$, $Y$ be two loopless graphs, and $h : X \to Y$ be a graph homomorphism. Then it is easy to note that a k-colouring of $h(X)$ can be extended to a k-colouring of $X$. 
 		
 		Let $\chi(G) = m$ and $c :  G \to K_m$ be a proper $m$-colouring of $G$. By definition of the pushout (quotient graph) $G$, $f(a)$ is identified with $g(a)$ for every $a \in V(A)$, therefore  $c|_{f(A)} = c|_{g(A)}$. Then $c|_{f(A)} = c|_{g(A)}$ gives a proper colouring of $f(A)$ and $g(A)$ and hence a proper colouring, say $\overline{c}$ of $A$. Define $c' : D_n \to K_m$ by letting $c'|_{B \cup C} = c|_{B \cup C}$ and  $c'|_{A \x \{i\}} = \overline{c}$, for  $i =1,2,\dots n-1$. Clearly $c'$ is a proper colouring of $D_n$, hence the claim.		
 	\end{proof}
 	
 	The inequality in the other direction is not true in general. 
 	For example, consider the following graphs as $A, B, C$.
 	
 	\vspace{.5cm}
 	
 	\begin{tabular}{m{1.6in} m{0.1in} m{1.6in} m{0.1in} m{1.6in}}
 		
		{\begin{center}
 			\begin{tikzpicture}[scale=.7]
 				\tikzstyle{edge} = [draw,thick,-]
 				\draw[edge] (0,2) -- (1,1) -- (3,1) -- (4,2);
 				\node [left] at (-0.1,2) { $a$};
 				\draw [fill] (0,2) circle [radius=0.15];
 				\node [below] at (1,.9) { $d$};
 				\draw [fill] (1,1) circle [radius=0.15];
 				\node [below] at (3,0.9) { $c$};
 				\draw [fill] (3,1) circle [radius=0.15];
 				\node [right] at (4.1,2) { $b$};
 				\draw [fill] (4,2) circle [radius=0.15];
			\end{tikzpicture}
			
 			Graph A
 			\end{center}
 		}
 		&	&
 		{\begin{center}
 				\begin{tikzpicture}[scale=0.7]
 				\tikzstyle{edge} = [draw,thick,-]	
 				\draw[edge] (0,2) -- (4,2) -- (3,1) -- (1,1) -- (0,2) -- (3,1);
 				\draw[edge] (0,2) -- (2,3) -- (4,2) -- (1,1);
 				\node [left] at (-0.1,2) { $a$};
 				\draw [fill] (0,2) circle [radius=0.15];
 				\node [below] at (1,.9) { $d$};
 				\draw [fill] (1,1) circle [radius=0.15];
 				\node [below] at (3,0.9) { $c$};
 				\draw [fill] (3,1) circle [radius=0.15];
 				\node [right] at (4.1,2) { $b$};
 				\draw [fill] (4,2) circle [radius=0.15];
 				\node [above] at (2,3.1) { $x$};
 				\draw [fill] (2,3) circle [radius=0.15];
 				 				
 			\end{tikzpicture}
 				
 				Graph B
 		\end{center}
 		}
 		&	&
 		{\begin{center}
 			\begin{tikzpicture}[scale=0.7]
 				\tikzstyle{edge} = [draw,thick,-]
 				\draw[edge] (4,2) -- (3,1) -- (1,1) -- (0,2);
 				\draw[edge] (1,1) -- (2,-1) -- (3,1);
 				\node [left] at (-0.1,2) { $a$};
 				\draw [fill] (0,2) circle [radius=0.15];
 				\node [above] at (1.2,1) { $d$};
 				\draw [fill] (1,1) circle [radius=0.15];
 				\node [above] at (2.8,1) { $c$};
 				\draw [fill] (3,1) circle [radius=0.15];
 				\node [right] at (4.1,2) { $b$};
 				\draw [fill] (4,2) circle [radius=0.15];
 				\node [below] at (2,-1.1) { $y$};
 				\draw [fill] (2,-1) circle [radius=0.15];
 				
 				\draw[edge] (0,2) to [out=225,in=180] (2,-1) to [out=0,in=315] (4,2);
 			\end{tikzpicture}
 				
 			Graph C
 		\end{center}
 		}
 	\end{tabular}

 	\begin{tabular}{m{3.5in}m{2in}}
 		{\hspace{-.2cm}
 			Let $f : A \to B$ and $g : A \to C$ be graph inclusions. Then the graph on the right is the pushout $G = B \displaystyle\sqcup_A C$ and has $\chi(G) = 5$. On the other hand, the double mapping cylinder $D_n = B \displaystyle\sqcup_f (A \x I_n) \displaystyle\sqcup_g C$ is 4-colourable for any $n>1$.
 		
 	 	This gives an example where $\chi(G)$ is strictly greater  than $\chi(D_n)$. }
 		&
 		{\begin{center}
 				\begin{tikzpicture}[scale=.68]
 					\tikzstyle{edge} = [draw,thick,-]
 					\draw[edge] (4,2) -- (3,1) -- (1,1) -- (0,2);
 					\draw[edge] (1,1) -- (2,-1) -- (3,1);
 					\node [left] at (-0.1,2) { $a$};
 					\draw [fill] (0,2) circle [radius=0.15];
 					\node [left] at (0.9,1) { $d$};
 					\draw [fill] (1,1) circle [radius=0.15];
 					\node [right] at (3.1,1) { $c$};
 					\draw [fill] (3,1) circle [radius=0.15];
 					\node [right] at (4.1,2) { $b$};
 					\draw [fill] (4,2) circle [radius=0.15];
 					\node [below] at (2,-1.1) { $y$};
 					\draw [fill] (2,-1) circle [radius=0.15];
 					\draw[edge] (0,2) -- (4,2) -- (3,1) -- (1,1) -- (0,2) -- (3,1);
 					\draw[edge] (0,2) -- (2,3) -- (4,2) -- (1,1);
 					\draw[edge] (0,2) to [out=225,in=180] (2,-1) to [out=0,in=315] (4,2);
 					\node [above] at (2,3.1) { $x$};
 					\draw [fill] (2,3) circle [radius=0.15];
 				\end{tikzpicture}

 				Graph $G$
 		\end{center}
 		}
 	\end{tabular}
 	
	The definition of double mapping cylinder of graphs is analogous to the one for topological spaces which we recall here.
 	
 	\begin{defn} \label{htpy pushout in Top}
 		Let $X,Y,Z$ be topological spaces with $p:X \to Y$ and $r:X \to Z$ continuous maps. The double mapping cylinder of $\{p,r\}$ is defined to be the quotient space $$ Y \displaystyle\bigsqcup_p (X \x [0,1]) \displaystyle\bigsqcup_r Z / \sim$$
 		where $\sim$ is an equivalence relation generated by $(x,1) \sim p(x)$ and $(x,0) \sim r(x)$. We  write $ Y \displaystyle\bigsqcup_X^h Z$ to mean the double mapping cylinder of the maps $\{p,r\}$. 
 	\end{defn}
 	
 Note that the double mapping cylinders give homotopy pushouts in the category of topological spaces. In particular, given two  homotopy equivalent pushout diagrams, their double mapping cylinders in spaces  are homotopy equivalent. 
     We have the following proposition about hom complexes of graphs in this context. 
 	\begin{prop} \label{general T main}
 		Let $T\in \cG$ and consider $ \Hom(T, \underline{ \ \ }) : \cG \to \text{Top}$.
 		Then for the diagram in Figure \ref{htpy} and $n \ge \diam(T) + 1$,
 		$$ \xymatrix{
 			\Hom(T,A)\ar[r]^{f_T} \ar[d]_{g_T}	& \Hom(T,B)\ar[d]^{j_1}\\
 			\Hom(T,C)\ar[r]_{j_2}				& \Hom(T,D_n)
 		}$$
 		is the homotopy pushout in Top, the category of topological spaces, that is, $\Hom(T,D_n)$ is the double mapping cylinder of maps $\{f_T,g_T\}$.
 	\end{prop}
 	
 	We can give a combinatorial proof of the above result by describing the polytopes in the hom complex, $\Hom(T,D_n)$, which we omit here for brevity. 
 	 Alternatively, we can prove this result using properties of pushouts of topological spaces and modifying ideas used in the proof of   \cite[Theorem 5.1]{matsushita-MappingCylinder} by Matsushita. 
	  However, he has made no reference or use of the above proposition in his article.  	

\vspace{0.25cm}

Here is a quick example of a graph which is a double mapping cylinder or smaller graphs and the above proposition helps us compute its hom complex. 
\vspace{0.25cm}

 	\noindent{\bf Application  1: } 
Given any two natural numbers $m,r \in \mathbb{N}$, we use Theorem \ref{general T main} to construct a graph $G$ such that $\chi(G) = m$, and $\conn \Hom(K_r,G) = 0$.
 	First let $r=2$ and consider the graph $D_n = B \sqcup_f (A \times I_n) \sqcup_g C$ with $B = K_m,\ A = K_2,\ C = K_m$; $m \ge 3$ and $f : A \rightarrow B$, $g : A \rightarrow C$ be inclusions. For $m=6$, $D_n$ is the graph: 
 	
%
%
%
%
%
%
%
%
%
 
 \begin{center}
 \centering
 \includegraphics[width=4.5in]{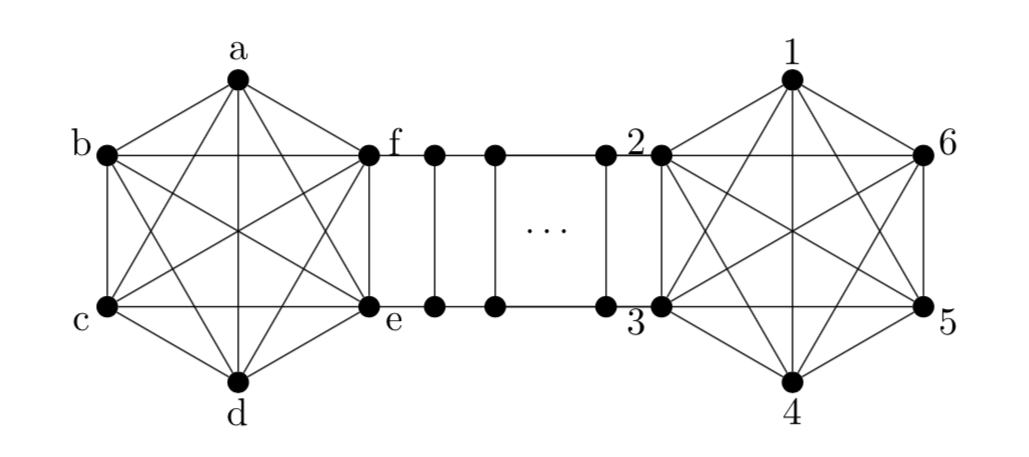}
 \end{center}

 	Since $f$ and $g$ are inclusions, the induced maps $f_{T} : \Hom(T,A) \rightarrow \Hom(T,B)$ and $g_T : \Hom(T,A) \rightarrow \Hom(T,C)$ are also inclusions. By Theorem \ref{general T main},
 	$\Hom(T,D_n) = \Hom(T,K_m \sqcup_f K_2 \x I_n \sqcup_g K_m) \simeq\Hom(T,B) \sqcup_{f_T} (\Hom(T,A) \x I) \sqcup_{g_T} \Hom(T,C)$, that is,
 	$$\Hom(K_2,D_n) \simeq\Hom(K_2,K_m) \sqcup_{f_{K_2}} (\Hom(K_2,K_2) \x I) \sqcup_{g_{K_2}} \Hom(K_2,K_m).$$
 	
 	The complex $\Hom(K_2, K_2) \x I$ is homotopy equivalent to two disjoint line segments, say $L_1$ and $L_2$. Suppose $L_1$ and $L_2$ have initial points $l_1,l_2$ and final points $l_1', l_2'$ respectively. Babson and Kozlov \cite{BabKoz-HOMcplx} proved  that $\Hom(K_2, K_m)$ is homotopy equivalent to a $(m-2)$-sphere $S^{m-2}$. 	
 	Thus for $T = K_2$, $f_T$ maps $l_1,l_2$ to $\Hom(K_2,B) \simeq S^{m-2}$ and $g_{T}$ maps $l_1',l_2'$ to $\Hom(K_2,C) \simeq  S^{m-2}$. 
 	
 	Therefore, $\Hom(K_2,D_n) \simeq S^{m-2} \vee S^1 \vee S^{m-2}$. In particular, $\conn \Hom(K_2,D_n) = 0$.
 	
 	For an arbitrary $r$ and $m$, $r< m$, define $G$ to be the double mapping cylinder of $f,g$ of height more than $r$, where $f=g : K_r \to K_m$ are the inclusion maps. Then the above argument shows that $\Hom(K_r,G)$ is homotopy equivalent to wedge of $1$-dimensional spheres, $\mathbb{S}^1$,  and $(m-r)$-dimensional spheres, $\mathbb{S}^{m-r}$. The graph $G$ is clearly $m$-colourable, and $\conn \Hom(K_r,G) = 0$.
 	
 	In fact, for any $r < p \le m$ if $G$ is the double mapping cylinder of $f : K_r \to K_m$, $g : K_r \to K_p$ of height more than $r$, then $\chi(G) = m$ and $\conn \Hom(K_r,G) = 0$. Let $\mathcal{T}$ be the class of graphs $G$ as defined above. Then by choosing $r$ small and $m$ large, $\mathcal{T}$ gives a class of graphs for which the homotopy test graph $K_r$ gives a weak lower bound for the chromatic number of graphs in $\mathcal{T}$ via the inequality \eqref{lovasz conjecture}.  \hfill{$\Box$}

 In the previous example, the graph $A$ is chosen to be $K_2$, which we recall is a test graph.	
 	We now generalise this construction for any homotopy test graph $T$. 
 	Let $G$ be a graph  and $v \in G$ be such that $G$ folds to $G-v$.
 	In view of \cite{fold-in-hom},  it is clear that for any graph $H$, 
	$\conn \Hom(G,H) = \conn \Hom(G-v,H)$.
 	Also, the chromatic number of $G$ is equal to the chromatic number of $G-v$.
 	Therefore, a graph $T$ is a homotopy test graph if and only if its unique (upto graph isomorphism) stiff subgraph $T'$ is a homotopy test graph. So without loss of generality let $T$ be a stiff homotopy test graph. We further assume that $T$ has an endomorphism\footnote{An endomorphism is a morphism from the object to itself.} (other than the identity morphism). Let $m \in \mathbb{N}$ be large enough so that there exists $i:T \to K_m$, a graph inclusion. Let  $n-1 = \diam(T)$, and $D_n$ be the double mapping cylinder of $f,g$ of length $n$, where $f=g=i$. Since $T$ is stiff, the identity morphism of $T$ is an isolated point in the hom complex of $\Hom(T,T)$ by Lemma 6.5 of \cite{x-htpy}. Therefore the hom complex, $\Hom(T,D_n)$, is homotopy equivalent to $X \vee \mathbb{S}^1$ for some prodsimplicial complex $X$ implying that $\pi_1(\Hom(T,D_n))$ is non-trivial, and hence $\conn \Hom(T,D_n) \le 0$. However, the chromatic number of $D_n$ is $m$.
 	We have shown the following:
 	
 	\begin{prop}\label{prop:test graph bad}
 		Let $T$ be a homotopy test graph with a non-identity endomorphism. For any $m \ge |V(T)|$, there exists a graph $G$ such that $\conn \Hom(T,G) = 0$, and $\chi(G) = m$.
 	\end{prop}
 		\vspace{.5cm}
 	
 	One can also use Theorem \ref{general T main} for computing homotopy type of several hom complexes easily if they are $\x$-homotopy equivalent to  double mapping cylinders of a pushout diagram of graphs. 
	 
	We next give such a quick computation for a class of graphs explored by Daneshpajouh in \cite{hamid-example}.

 	\noindent{\bf Application 2: }
 	Let $H$ and $L$ be  non-bipartite graphs  with disjoint vertex sets and let $x \in V(H)$ and $y \in V(L)$. Daneshpajouh defined $G_{H_x,L_y}$ to be the graph (cf. \cite[Definition 1]{hamid-example}) to be the graph with $V(G_{H_x,L_y}) = V(H) \cup V(L) \cup \{z\}$ and $E(G_{H_x,L_y}) = E(H) \cup E(L) \cup \{xz,zy\}$ where $z \notin (V(H) \cup V(L))$.
 	He computed the homotopy type of the neighbourhood complex of this graph. 
 	Using this construction, for any given positive integers $l,m$, and $2 \le p \le q$, he has defined a graph containing a copy of $K_{l,m}$, having the chromatic number, clique number and Lov\'{a}sz bound as $q,p$, and 3, respectively.

 	We compute the homotopy type of the $\NC(G_{H_x,L_y})$ using Theorem \ref{general T main}. 
 	Let $B$ and $C$ be two non-trivial graphs with special vertices $b \in V(B), c \in V(C)$. For $A = K_2$ with $V(K_2) = \{x,y\}$, let $B_b = (B \sqcup K_2)/(b \sim x), C_c = (C \sqcup K_2)/(c \sim x)$ be the graphs obtained by taking wedge sum (for definition, please refer to \cite{wedge-sum}) of $B$ with $K_2$ and $C$ with $K_2$ respectively. 
 	
 	Let $f : A \to B_b, g : A \to C_c$ be graph inclusions. Then the pushout of $f,g$ is the graph $G_{B_b,C_c}$.
 	Let $D_2$ be the double mapping cylinder of $f$ and $g$ of height 2. Then $D_2$ is $\x$-homotopic to $G_{B_b,C_c}$ via a sequence of folds, therefore by \cite{fold-in-hom}, $\NC(G_{B_b,C_c})$ is homotopically equivalent to $\NC(D_2)$.
 	By Theorem \ref{general T main}, $\NC(G_{B_b,C_c})$ is homotopically equivalent to the wedge $\NC(B_c) \vee \NC(C_c) \vee \mathbb{S}^1 \simeq \NC(B) \vee \NC(C) \vee \mathbb{S}^1$ as $B_b$ and $C_c$ folds to $B$ and $C$, respectively.

	 Proposition \ref{general T main} also applies in cases when the neighbourhood complex of the double mapping cylinder is  more connected than that of pushout.   
	For instance, consider the graph $G$ to be the wedge of $K_4$ and $K_3$ at a vertex, and choose the following as $A,B$ and $C$.
	
	\begin{tabular}{m{1.6in} m{0.2in} m{1.6in} m{0.2in} m{1.6in}}
		{\begin{center}
				\begin{tikzpicture}[scale=0.7]
				\tikzstyle{edge} = [draw,thick,-]		
				\draw[edge] (2,1.5) -- (0,0) -- (0,3) -- (2,1.5) -- (4,3);
				\draw[edge] (0,0) -- (.8,1.5) -- (0,3);
				\draw[edge] (.8,1.5) -- (2,1.5);
				\draw [fill] (0,0) circle [radius=0.15];
				\node [below] at (0,-0.1) { $x$};
				\draw [fill] (0,3) circle [radius=0.15];
				\node [above] at (0,3.1) { $y$};
				\draw [fill] (.8,1.5) circle [radius=0.15];
				\node [left] at (.7,1.5) { $z$};	
				\draw [fill] (2,1.5) circle [radius=0.15];	
				\node [below] at (1.9,1.3) { $u$};		
				\draw [fill] (4,3) circle [radius=0.15];		
				\node [above] at (4,3.1) { $v$};		
				\draw[fill](4,0)circle[radius=0.15];																							
				\node [below] at (4,-0.1) { $a$};
				\end{tikzpicture}
				
				Graph A
			\end{center}
		}
		&	&
		{\begin{center}
				\begin{tikzpicture}[scale=0.7]
				\tikzstyle{edge} = [draw,thick,-]		
				\draw[edge] (2,1.5) -- (0,0) -- (0,3) -- (2,1.5) -- (4,3) -- (4,0);
				\draw[edge] (0,0) -- (.8,1.5) -- (0,3);
				\draw[edge] (.8,1.5) -- (2,1.5);
				\draw [fill] (0,0) circle [radius=0.15];
				\node [below] at (0,-0.1) { $x$};
				\draw [fill] (0,3) circle [radius=0.15];
				\node [above] at (0,3.1) { $y$};
				\draw [fill] (.8,1.5) circle [radius=0.15];
				\node [left] at (.7,1.5) { $z$};	
				\draw [fill] (2,1.5) circle [radius=0.15];	
				\node [below] at (1.9,1.3) { $u$};		
				\draw [fill] (4,3) circle [radius=0.15];		
				\node [above] at (4,3.1) { $v$};		
				\draw[fill](4,0)circle[radius=0.15];																							
				\node [below] at (4,-0.1) { $a$};
				
				\end{tikzpicture}
				
				Graph B
			\end{center}
		}
		&	&
		{\begin{center}
				\begin{tikzpicture}[scale=0.7]
				\tikzstyle{edge} = [draw,thick,-] 				
				\draw[edge] (2,1.5) -- (0,0) -- (0,3) -- (2,1.5) -- (4,3);
				\draw[edge] (0,0) -- (.8,1.5) -- (0,3);
				\draw[edge] (.8,1.5) -- (2,1.5);
				\draw[edge] (2,1.5) -- (4,0);
				\draw [fill] (0,0) circle [radius=0.15];
				\node [below] at (0,-0.1) { $x$};
				\draw [fill] (0,3) circle [radius=0.15];
				\node [above] at (0,3.1) { $y$};
				\draw [fill] (.8,1.5) circle [radius=0.15];
				\node [left] at (.7,1.5) { $z$};	
				\draw [fill] (2,1.5) circle [radius=0.15];	
				\node [below] at (1.9,1.3) { $u$};		
				\draw [fill] (4,3) circle [radius=0.15];		
				\node [above] at (4,3.1) { $v$};		
				\draw[fill](4,0)circle[radius=0.15];																	
				\node [below] at (4,-0.1) { $a$};
				
				\end{tikzpicture}
				
				Graph C
			\end{center}
		}
	\end{tabular}
	
	\vspace{-0.5cm}
	
	With respect to inclusions of $A$ in $B$, and in $C$, let $D_n = B \displaystyle\bigsqcup_A^h C$, then computations show that $D_n \simeq_{\x} K_4$ and hence $\Hom(K_2,D_n)$ is homotopically equivalent to a 2-sphere, $\mathbb{S}^2$. 
	Therefore $\NC(D_n)$ is 1-connected. 
	We can compute $\NC(G)$ and see that its first homology is non trivial and hence is not 1-connected. Thus connectivity of $\NC(D_n)$ is higher in this pushout. Since $\chi(G)\geq \chi(D_n)$, here the Lovasz bound is improved. 
	
	However,  $\conn \NC(G)$ is not always smaller than  $\conn \NC(D_n)$.  For instance, in the first application  where we use Prop 3.3 to compute the connectivity of a double mapping cylinder, the neighbourhood complex of the pushout graph $K_6\displaystyle\coprod_{K_2} K_6$ has connectivity at least $1$ (cf. \cite{CSORBA-chordal}).

	\section{ Is $D_n$ a homotopy pushout in graphs?} \label{sec:Dn not good notion section}

	We have already seen several properties of the double mapping cylinder $D_n$. In order to fully utilize these ideas, we need a way to recognize when a given pushout diagram is weakly equivalent to a double mapping cylinder.
	
	The definition  of $D_n$ is analogous to that of double mapping cylinder in spaces. In topological spaces, the double mapping cylinder gives a homotopy pushout. Recall  homotopy pushout in spaces are preserved under  homotopy equivalent diagrams and we can recognize when a given pushout is a homotopy pushout (namely, if one of the maps in the pushout diagram  is a cofibration).
	
	Consider the category, $\cG$, of graphs with weak equivalences as $\x$-homotopy equivalences. If the  double mapping cylinder construction in  graphs is a homotopy pushout in the category of graphs, then we can build the theory around it to get a recognizing principle. 
	
		However, the double mapping cylinder fails to preserve $\x$-homotopy equivalent diagrams. Let $A \xrightarrow[h_A]{\simeq} A',\ B \xrightarrow[h_B]{\simeq} B',\ C \xrightarrow[h_C]{\simeq} C'$ be $\times$-homotopy equivalences with $\times$-homotopy inverses $A' \xrightarrow[h_{A'}]{\simeq} A,\ B' \xrightarrow[h_{B'}]{\simeq} B,\ C' \xrightarrow[h_{C'}]{\simeq} C$ respectively. Let $f' : A' \to B', g' : A' \to C'$ be graph homomorphisms. 
		There exists a canonical graph homomorphism $F : D_n \to D'_n$ which is defined as  $F(b) = h_B(b)$,  $F(a,i) = (h_A(a),i)$  and $F(c) = h_C(c)$ for all  $b \in V(B),\ a \in V(A), \  i \in V(I_n),\  c \in V(C)$.

		Let $f h_{A'} \underset{H_B}{\simeq_{\x}} h_{B'} f' $, $h_{C'} g' \underset{H_C}{\simeq_{\x}} g h_{A'}$ be $\times$-homotopies of length $m$ and $k$ respectively. Analogous to the case of topological spaces, we expect to define $F' : D_{n'}' \to D_n$ using the homotopies $H_C$ and $H_B$. 
		To do this in graphs, one needs to take the double mapping cylinder of $\{f',g'\}$ of length at least $m+k$ more than that of double mapping cylinder of $\{f,g\}$. Thus there is no  graph homomorphism  $F' : D_n' \to D_n$.
		
		Since the domain and codomain for the composite $F F'$ are double mapping cylinders of $\{f',g'\}$ of different lengths, $F F'$ cannot be $\x$-homotopic to the identity homomorphism of any graph. 		 
		As  mentioned earlier, $D_n$ is not $\x$-homotopic to $D_m$, for $n \ne m$. Theorem \ref{general T main} implies that there is a homotopy equivalence between  $\Hom(T,D_n)$ and $\Hom(T,D_m)$ for all $T$. However, this is not induced by a graph homomorphism. 
		
		We then check if  $F F'$ is $\x$-homotopic to a graph homomorphism, 'shrink', that induces a homotopy equivalence on $\Hom(T,\underline{\ \ })$,  for all $T$. If this turned out to be true, then we would add a few more weak equivalences to our category  of graphs without compromising the structure. In the category of graphs with this new class  of weak equivalences, we will stand a chance at characterizing $D_n$ as a homotopy pushout. However, this is also false.
		
		We define the graph homomorphism $shrink_{m+k} : D_{m+n+k}' \rightarrow D_{n}'$ as identity on $ B' \sqcup C'$ and
		\begin{equation}
		shrink_{m+k}(a',s)=
		\left\{
		\begin{array}{l l l}	
		(a',0)		&	\mbox{if } 0 \leq s \leq k, \\
		(a', s-k)	&	\mbox{if } k \leq s \leq k+n, \\
		(a', n)		&	\mbox{if } k+n \leq s \leq k+n+m,
		\end{array}
		\right.
		\end{equation}
		for all $a' \in V(A') $ and $ s \in V(I_{m+n+k})$. The subscript $m+k$ in  $shrink_{m+k}$ indicates the levels above and below $n$, respectively, that the map $shrink_{m+k}$ shrinks.
		Clearly, $shrink_{m+k}$ induces a homotopy equivalence from $\Hom(T,D_{m+n+k}') \to \Hom(T,D_{n}')$, for all $T \in \cG$ and $m,n,k \ge 0$.
		
		Let $l = \max\{m,k\}$ so that all the given homotopies are of the same length $l$. This can be done by repeating the values on the last level of the homotopy on the newly added levels.

		For $n' = n+ 2l$, $F' : D_{l + n + l}' \rightarrow D_n$ can be defined as
		$F'(b') = h_{B'}(b')$,	$F'(c') = h_{C'}(c')$, for all $b' \in B'$ and $ c' \in C'$,
		and for $a' \in A', s \in I_{l + n + l}$, 
		$$F'(a',s) = 
		\left\{
		\begin{array}{l l l}
		H_C(a',s) & \mbox{if } 0 \leq s \leq l,\\
		(h_{A'}(a'),s-l) & \mbox{if } l \leq s \leq l+n,\\
		H_B(a', s-(n+l)) & \mbox{if } l+n \leq s \leq l+n+l,
		\end{array}\right.
		$$

		Suppose $F F' \simeq_{\x} shrink_{l+l}$, then for some $r \in \mathbb{N}$ there exists a $\x$-homotopy $K : D'_{l+n+l} \x I_r \to D'_n$ such that $K|_{D'_{l+n+l} \x \{0\}} = F F'$ and $K|_{D'_{l+n+l} \x \{r\}} = shrink_{l+l}$.
		Since $F F'|_{A' \x I_l}$ is also a $\x$-homotopy, in general $ K(a',l-1,0) = F F' (a',l-1) = h_C H_C (a',l-1)$ need not be adjacent to $K(a'',l,1)$, where $a'a'' \in E(A')$. 
		Therefore, with no other assumptions, if we use only these given homotopies, then there does not exist a $\x$-homotopy between $FF'$ and the $shrink_{l+l}$. 
		
		 One possible approach to resolve this issue is  to define an appropriate class of cofibrations with  the class of weak equivalences as  the $\x$-homotopy equivalences. Such a setup will allow us to give a better definition of homotopy pushouts in graphs. We explore this  by studying model structures on the category of finite graphs with $\x$-homotopy equivalences  in \cite{self-model-category}.


 \end{document}